\documentclass[reqno]{amsart}

\usepackage{amssymb}
\usepackage{graphicx}
\usepackage{amscd}
\usepackage[pagebackref]{hyperref}
\usepackage{color}
\usepackage{tabularx}
\usepackage[table]{xcolor}
\usepackage{float}
\usepackage{graphics,amsmath,amssymb}
\usepackage{amsthm}
\usepackage{amsfonts}
\usepackage{latexsym}
\usepackage{epsf}
\usepackage{xifthen}
\usepackage{mathrsfs}
\usepackage{dsfont}
\usepackage{makecell}
\usepackage{subfig}
\usepackage{amsmath}
\allowdisplaybreaks[4]
\usepackage{listings}
\usepackage{etoolbox}
\usepackage{fancyhdr}
\usepackage{pdflscape}
\usepackage[title,toc,titletoc]{appendix}
\usepackage{enumitem}
\usepackage[noadjust]{cite}
\usepackage{tikz}
\usetikzlibrary{automata,positioning,arrows}
\usepackage{young}
\usepackage[object=vectorian]{pgfornament} 
\usepackage{lipsum,tikz}
\usepackage{multirow}
\usepackage[OT2,T1]{fontenc}
\usepackage{mathtools}
\usepackage{ytableau}

\hypersetup{
	colorlinks=true, 
	linktoc=all, 
	linkcolor=blue} 

\numberwithin{equation}{section}

\theoremstyle{theorem}
\newtheorem{theorem}{Theorem}[section]
\newtheorem*{theorem*}{Theorem}

\newtheorem{corollary}[theorem]{Corollary}
\newtheorem{lemma}[theorem]{Lemma}

\newtheorem{question}[theorem]{Question}

\newtheorem{innercustomgeneric}{\customgenericname}
\providecommand{\customgenericname}{}
\newcommand{\newcustomtheorem}[2]{%
	\newenvironment{#1}[1]
	{%
		\renewcommand\customgenericname{#2}%
		\renewcommand\theinnercustomgeneric{##1}%
		\innercustomgeneric
	}
	{\endinnercustomgeneric}
}
\newcustomtheorem{ctheorem}{Theorem}
\newcustomtheorem{clemma}{Lemma}

\theoremstyle{definition}
\newtheorem{definition}[theorem]{Definition}

\newtheorem*{example*}{Example}
\newtheorem*{examples*}{Examples}
\newtheorem{remark}[theorem]{Remark}
\newtheorem*{remark*}{Remark}
\newtheorem*{remarks*}{Remarks}
\newtheorem*{note*}{Note}

\newtheoremstyle{named}{}{}{\itshape}{}{\bfseries}{.}{.5em}{#1\thmnote{ #3}}
\theoremstyle{named}

\DeclareMathAlphabet{\mydutchcal}{U}{dutchcal}{m}{n}
\newcommand{\dcD}{\mydutchcal{D}}
\newcommand{\dcK}{\mydutchcal{K}}

\newcommand{\qbinom}[2]{{#1\brack #2}}
\newcommand{\tqbinom}[2]{{\textstyle{#1\brack #2}}}

\newcommand{\RHS}{\operatorname{RHS}}

\newcommand{\Res}{\operatorname{Res}}
\newcommand{\parity}{\operatorname{par}}

\newcommand{\lrg}{\mathsf{lrg}}
\newcommand{\len}{\mathsf{len}}

\newcommand{\rT}{\mathrm{T}}
\newcommand{\Tfnc}[4]{\mathrm{T}_{#1}\!\left(\begin{matrix}
		#2\\#3
	\end{matrix}\,;#4\right)}

\title[Finite Kleshchev bipartitions]{Finite Kleshchev bipartitions and $q$-trinomial coefficients}

\author[S. Chern]{Shane Chern}
\address{Fakult\"at f\"ur Mathematik, Universit\"at Wien, Oskar-Morgenstern-Platz 1, Wien 1090, Austria}
\email{chenxiaohang92@gmail.com, xiaohangc92@univie.ac.at}

\date{}

\keywords{Kleshchev bipartition, $q$-trinomial coefficient, generating function.}

\subjclass[2020]{05A15, 05A17.}

\begin{document}
	
\sloppy

\begin{abstract}
	The Kleshchev multipartitions arise in the representation theory for the Ariki--Koike algebras. In previous work, Li, Stanton, Xue, Yee, and the author considered a refined enumeration for the $2$-dimensional case, namely, the Kleshchev bipartitions, by invoking the $2$-residue statistic for partitions. In this paper, we make further elaboration by bounding the largest part of the bipartitions and show that the related counting functions are connected with two families of $q$-trinomial coefficients introduced by Andrews and Baxter.
\end{abstract}

\maketitle

\section{Introduction}

A \emph{partition} $\lambda$ of a natural number $n$ is a nondecreasing sequence of positive integers, known as \emph{parts} in $\lambda$, such that their sum equals $n$. In particular, we allow the sequence to be empty, giving the unique partition of $0$, which is referred to as the \emph{empty partition}. Let us call $n$ the \emph{size} of $\lambda$, denoted by $|\lambda|$. Also, the \emph{length} of $\lambda$ is defined as the number of parts in this partition. We introduce the statistics 
\begin{align*}
	\len(\lambda) &:= \text{length of $\lambda$},\\
	\lrg(\lambda) &:= \text{largest part in $\lambda$}.
\end{align*}
In this work, we mainly focus on partitions into distinct parts, known as \emph{strict partitions}.

Exploring the representations of the Ariki--Koike algebras, which were independently introduced in \cite{AK1994} and \cite{BM1993}, Ariki and Mathas~\cite{Ari1996, AM2000} observed that the simple modules of these algebras can be labeled by the \emph{Kleshchev multipartitions}, which are certain tuples of strict partitions. As such, it was further shown in \cite{AM2000} that the generating function for Kleshchev multipartitions can be derived Lie-algebraically by combining Ariki's categorification theorem and the Weyl--Kac character formula. Along a different road, Li, Stanton, Xue, Yee, and the author~\cite{CLSXY2024} revisited this generating function combinatorially and established a new multiple Rogers--Ramanujan type identity by basic hypergeometric means.

In the second part of \cite{CLSXY2024}, our emphasis was placed on the $2$-dimensional case, namely, the \emph{Kleshchev bipartitions}. Here, we define such bipartitions in the way of Mathas~\cite{Mat1998}, or more precisely, according to \cite[pp.~497--498, Definition~2.3 and Remark~2.1]{CLSXY2024}.

\begin{definition}
	For $a\in \{1,2\}$, the \emph{Kleshchev bipartition} set $\Lambda^{a,2}$ consists of pairs $(\pi^{(1)}, \pi^{(2)})$ of \emph{strict} partitions such that
	\begin{align*}
		\lrg(\pi^{(1)}) + t_1 \le \len(\pi^{(2)}) + t_2,
	\end{align*}
	where
	\begin{align*}
		t_i = \begin{cases}
			0, & \text{if $1\le i\le a$},\\
			1, & \text{if $a+1\le i\le 2$}.
		\end{cases}
	\end{align*}
\end{definition}

Specifically, we considered in \cite{CLSXY2024} a refined enumeration of Kleshchev bipartitions by invoking the $2$-residue statistic for partitions. Recall that given a partition $\lambda := (\lambda_1,\lambda_2,\ldots,\lambda_l)$, its \emph{Young diagram} depicts this partition by boxes placed in rows such that there are $\lambda_i$ boxes in the $i$-th row. For the box in the $i$-th row and $j$-th column, we define its \emph{$2$-residue} as
\begin{align*}
	\Res_\lambda(i,j) := (i-j) \bmod{2}.
\end{align*}
For example, the Young diagram of the partition $(5,3,2,2)$ and the $2$-residue of each box are as follows:
\begin{align*}
	\vcenter{\vspace*{5pt}\hbox{\begin{ytableau}
				0 & 1 & 0 & 1 & 0\\
				1 & 0 & 1\\
				0 & 1\\
				1 & 0
	\end{ytableau}}\vspace*{5pt}}\quad.
\end{align*}
Now define the \emph{$2$-residue difference} statistic for $\lambda$ as
\begin{align*}
	\omega(\lambda) := \#\big\{(i,j) : \Res_\lambda(i,j) = 0\big\} - \#\big\{(i,j) : \Res_\lambda(i,j) = 1\big\}.
\end{align*}
This statistic, as indicated in \cite{BG2008}, is in essence the \emph{BG-rank} introduced by Berkovich and Garvan~\cite{BG2006}.

For any Kleshchev bipartition $\pi = (\pi^{(1)},\pi^{(2)}) \in \Lambda^{a,2}$, let us abuse the notation
\begin{align*}
	|\pi| := |\pi^{(1)}| + |\pi^{(2)}|,
\end{align*}
and
\begin{align*}
	\lrg(\pi) := \lrg(\pi^{(2)}).
\end{align*}
In addition, we write
\begin{align*}
	\omega(\pi) = \omega^{a,2}(\pi) := (-1)^{t_1}\omega(\pi^{(1)}) + (-1)^{t_2}\omega(\pi^{(2)}).
\end{align*}
The following enumerations were established in \cite[p.~494, Theorem~1.3 and Corollary~1.4]{CLSXY2024}.

\begin{theorem}[Chern--Li--Stanton--Xue--Yee]\label{th:CLSXY}
	For $s\in \mathbb{Z}$,
	\begin{align}\label{eq:CLSXY-2}
		\sum_{\substack{\pi \in \Lambda^{2,2}\\ \omega(\pi) = s}} q^{|\pi|} = q^{s(s-1)} \frac{(-q;q^2)_\infty+ (-1)^s(q;q^2)_\infty}{2(q^2;q^2)_\infty},
	\end{align}
	and
	\begin{align}\label{eq:CLSXY-1}
		\sum_{\substack{\pi \in \Lambda^{1,2}\\ \omega(\pi) = s}} q^{|\pi|} = q^{s^2} \frac{(-q^2;q^2)_\infty}{(q^2;q^2)_\infty}.
	\end{align}
\end{theorem}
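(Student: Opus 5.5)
The plan is to fix the second component first and then sum over it. For a strict partition $\lambda=(\lambda_1>\lambda_2>\cdots>\lambda_l)$, the boxes of row $i$ alternate in residue, starting from $(i-1)\bmod 2$. Hence a row of even length contributes $0$ to $\omega(\lambda)$, and a row of odd length contributes $(-1)^{i-1}$. So $\omega(\lambda)$ depends only on which rows have odd length and on their positions.

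\textbf{Step 1 (bounded generating function).} For $N\ge 0$ define
\begin{align*}
G_N(z,q):=\sum_{\substack{\lambda \text{ strict}\\ \lrg(\lambda)\le N}} z^{\omega(\lambda)}q^{|\lambda|}.
\end{align*}
I would compute $G_N$ by splitting the parts of $\lambda$ into even and odd parts. Read from the top, the odd parts sit in rows whose parities are governed by the interleaving with the even parts, so the sign pattern is an alternating sum over blocks. The goal is a finite expression for $G_N$ as a sum over $j$ of terms $z^{j}q^{\text{quadratic in } j}$ times Gaussian binomials in base $q^2$. The expected shape is $\sum_j z^j q^{2j^2-j}\qbinom{N}{\cdot}_{q^2}$-type terms, with the parity of $N$ entering the binomial arguments.

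\textbf{Step 2 (generating function by length).} Similarly define $F_m(z,q)$ as the same sum over strict partitions of length exactly $m$ with no bound on the largest part. Subtracting the staircase $(m,m-1,\ldots,1)$ turns a strict partition of length $m$ into an ordinary partition into at most $m$ parts. Since the staircase's row lengths alternate in parity, the odd/even row-length data becomes a question about the parities of the differences $\lambda_i-\lambda_{i+1}$. This gives $F_m$ as a finite sum of $z^{j}q^{(\cdot)}/\big((q^2;q^2)_{\cdot}(q^2;q^2)_{\cdot}\big)$.

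\textbf{Step 3 (assemble and extract).} By the definition of $\Lambda^{a,2}$ and of $\omega^{a,2}$, the full generating function is
\begin{align*}
\sum_{m\ge 0} F_m\big(z^{(-1)^{t_2}},q\big)\, G_{m+t_2-t_1}\big(z^{(-1)^{t_1}},q\big).
\end{align*}
For $a=2$ we have $t_1=t_2=0$, and for $a=1$ we have $t_1=0$, $t_2=1$. I would extract the coefficient of $z^s$ and then perform the sum over $m$ using the $q$-Chu--Vandermonde summation (in base $q^2$) together with $\sum_{m} q^{2m^2+\cdots}/(q^2;q^2)_m\cdots$ Durfee-type evaluations. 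This should collapse the double sum to $q^{s(s-1)}$ or $q^{s^2}$ times a single series independent of $s$, apart from a parity sign $(-1)^s$ in the case $a=2$. The final identification with the products is then by Euler's identities. For $a=1$, $(-q^2;q^2)_\infty/(q^2;q^2)_\infty=\sum_n q^{n^2+n}/(q^2;q^2)_n^2$. For $a=2$, the average $\frac12\big((-q;q^2)_\infty\pm(q;q^2)_\infty\big)$ picks out even or odd numbers of odd parts, matching the parity of $s$.

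The main obstacle is Step 3. Interchanging sums and finding the right summation formula so that the $s$-dependence factors out cleanly as $q^{s(s-1)}$ or $q^{s^2}$ is the delicate point. If a direct summation fails, my fallback is a bijective shift: map bipartitions with statistic $s$ to those with statistic $s-1$ (or $s-2$) while changing the size by exactly $2s$ or $2s-1$. Such a map would reduce the theorem to the case $s=0$, or to $s\in\{0,1\}$, which is then a single $q$-series evaluation via Euler's theorem.
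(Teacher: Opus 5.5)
You have the setup right. A row $i$ of odd length contributes $(-1)^{i-1}$ to $\omega$, and $\sum_{m\ge0}F_m(z^{(-1)^{t_2}},q)\,G_{m+t_2-t_1}(z^{(-1)^{t_1}},q)$ correctly encodes $\Lambda^{a,2}$ together with $\omega^{a,2}$. Steps 1 and 2 are only described by their ``expected shape.'' Closed forms of that shape do exist: they are the $N\to\infty$ cases of the Berkovich--Uncu formulas \eqref{eq:Sigma} and \eqref{eq:Psi}.

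The genuine gap is Step 3. It carries the entire content of the theorem, and you give no argument for it. For $a=2$, the even-$m$ part of $[z^s]$ is already
\[
\sum_{k\ge0}\sum_{j=0}^{2k}\frac{q^{k(2k+1)+j+2i^2-i}}{(q^2;q^2)_j\,(q^2;q^2)_{2k-j}}\qbinom{2k}{k+i}_{q^2},\qquad i=s+k-j,
\]
plus an analogous odd-$m$ family. Here $s$ is entangled with both summation indices, in the quadratic exponent and in the binomial. You name no transformation under which $q^{s(s-1)}$ (resp.\ $q^{s^2}$) splits off and the remainder depends only on the parity of $s$. ``$q$-Chu--Vandermonde plus Durfee-type evaluations should collapse this'' is a hope, not a step. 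This very decomposition appears in the paper, in finite form $\sum_M\Sigma_M\Psi_{M,N}$, only in the closing section. There, a direct $q$-hypergeometric proof of the resulting identities \eqref{eq:q-id-1}--\eqref{eq:q-id-2} is posed as an open question.

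The paper does not reprove this theorem; it quotes it from earlier work. It is recovered as the $N\to\infty$ limit of \eqref{eq:GF-2} and \eqref{eq:GF-1}, which are proved with no double sum at all:
\begin{enumerate}
\item[(i)] Two combinatorial maps give a recurrence in $N$. The first deletes the largest part $N$ of $\pi^{(2)}$. The second prepends $\len(\pi^{(2)})$ to $\pi^{(1)}$ and subtracts $1$ from each part of $\pi^{(2)}$.
\item[(ii)] That recurrence is matched with the Andrews--Baxter trinomial recurrences.
\item[(iii)] The Andrews--Baxter limits then give the products. The congruence $N-(s-N+2\lfloor N/2\rfloor)\equiv s\pmod 2$ produces the factor $(-1)^s$, and the extra term in \eqref{eq:GF-1} vanishes because it carries $q^{N-(-1)^Ns+1}$.
\end{enumerate}

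The parts you do make explicit also contain errors. The identity $\frac{(-q^2;q^2)_\infty}{(q^2;q^2)_\infty}=\sum_{n\ge0}\frac{q^{n^2+n}}{(q^2;q^2)_n^2}$ is false: the coefficient of $q^2$ is $2$ on the left and $1$ on the right. The correct evaluation, from the $q$-Gauss sum with one parameter sent to infinity, is $\sum_{n\ge0}\frac{(-1;q^2)_n\,q^{n^2+n}}{(q^2;q^2)_n^2}$.

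Your fallback is also unspecified, and its parameters are off for $a=2$. The $s$-free factors for $s$ and $s-1$ differ because the sign $(-1)^s$ flips. So a shift has to go from $s$ to $s-2$ and change the size by $4s-6$, not $2s$. Even with such bijections you would still need the base cases, e.g.\ $\sum_{\pi\in\Lambda^{1,2},\,\omega(\pi)=0}q^{|\pi|}=\frac{(-q^2;q^2)_\infty}{(q^2;q^2)_\infty}$. That is a nontrivial evaluation, not a consequence of Euler's theorem.
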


Throughout, we adopt the conventional \emph{$q$-Pochhammer symbols} for $n\in \mathbb{Z}$:
\begin{align*}
	(a;q)_\infty := \prod_{k\ge 0} (1-aq^k),\qquad\qquad (a;q)_n := \frac{(a;q)_\infty}{(aq^n;q)_\infty}.
\end{align*}
The infinite products in \eqref{eq:CLSXY-2} and \eqref{eq:CLSXY-1} easily remind us of the limiting case of the \emph{$q$-trinomial coefficients} introduced by Andrews and Baxter~\cite{AB1987}. In particular, the following two versions \cite[p.~299, eqs.~(2.8) and (2.9)]{AB1987} are relevant:
\begin{align}
	\rT_0(L,A;q)=\Tfnc{0}{L}{A}{q} &:= \sum_{l=0}^L (-1)^l \qbinom{L}{l}_{q^2} \qbinom{2L-2l}{L-A-l}_q,\label{eq:T0}\\
	\rT_1(L,A;q)=\Tfnc{1}{L}{A}{q} &:= \sum_{l=0}^L (-q)^l \qbinom{L}{l}_{q^2} \qbinom{2L-2l}{L-A-l}_q,\label{eq:T1}
\end{align}
where the \emph{$q$-binomial coefficients} are defined for $M,N\in \mathbb{Z}$ by
\begin{align*}
	\qbinom{N}{M}_q := \frac{(q^{N-M+1};q)_M}{(q;q)_M}.
\end{align*}
It was shown in \cite[p.~310, eqs.~(2.53) and (2.54)]{AB1987} that
\begin{align*}
	\lim_{L\to \infty} \Tfnc{0}{L}{A}{q} = \begin{cases}
		\dfrac{(-q;q^2)_\infty+(q;q^2)_\infty}{2(q^2;q^2)_\infty}, & \text{if $L-A$ is even},\\[15pt]
		\dfrac{(-q;q^2)_\infty-(q;q^2)_\infty}{2(q^2;q^2)_\infty}, & \text{if $L-A$ is odd},
	\end{cases}
\end{align*}
and in \cite[p.~310, eq.~(2.51)]{AB1987} that
\begin{align*}
	\lim_{L\to \infty} \Tfnc{1}{L}{A}{q} = \frac{(-q^2;q^2)_\infty}{(q^2;q^2)_\infty}.
\end{align*}

As such, it is a natural question to ask if Kleshchev bipartitions are coincidentally connected with the two limits. To reveal such connections, we are motivated to consider \emph{finite} Kleshchev bipartitions and prove the following relations.

\begin{theorem}
	For $N\in \mathbb{Z}_{\ge 0}$ and $s\in \mathbb{Z}$,
	\begin{align}\label{eq:GF-2}
		\sum_{\substack{\pi \in \Lambda^{2,2}\\\lrg(\pi)\le N\\ \omega(\pi) = s}} q^{|\pi|} = q^{s(s-1)}\Tfnc{0}{N}{s-N+2\lfloor\tfrac{N}{2}\rfloor}{q},
	\end{align}
	and
	\begin{align}\label{eq:GF-1}
		\sum_{\substack{\pi \in \Lambda^{1,2}\\\lrg(\pi)\le N\\ \omega(\pi) = s}} q^{|\pi|} = q^{s^2}\left[\Tfnc{1}{N}{s}{q} + q^{N-(-1)^N s + 1} \Tfnc{0}{N}{s-(-1)^N}{q}\right].
	\end{align}
\end{theorem}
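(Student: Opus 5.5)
The plan is to fix $\pi^{(2)}$ first and then sum over $\pi^{(1)}$, using the known $2$-residue generating function for strict partitions with bounded largest part. The constraint reads $\lrg(\pi^{(1)})\le \len(\pi^{(2)})$ for $a=2$ and $\lrg(\pi^{(1)})\le \len(\pi^{(2)})+1$ for $a=1$. Note that $\len(\pi^{(2)})\le \lrg(\pi^{(2)})\le N$. So if $\pi^{(2)}$ is fixed with $\len(\pi^{(2)})=m$, then $\pi^{(1)}$ ranges over strict partitions into parts at most $m$ (respectively $m+1$).

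\textbf{Step 1: a bivariate generating function for strict partitions with bounded parts.} For a strict partition, the BG-rank is computed part by part. A part $k$ in row $i$ contributes $\pm1$ or $0$ depending on the parities of $k$ and $i$. I would use the standard encoding of a strict partition with parts $\le m$ as a subset of $\{1,\dots,m\}$. I would then write
\begin{align*}
F_m(z;q):=\sum_{\lambda\ \text{strict},\ \lrg(\lambda)\le m} z^{\omega(\lambda)}q^{|\lambda|}
\end{align*}
as a finite sum. It can be obtained, for instance, by splitting $\lambda$ into its even- and odd-indexed parts, or via the $2$-quotient/$2$-core decomposition restricted to strict partitions. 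In the latter case the core is a staircase, which gives the factor $z^{s}q^{s(s-1)/2}$-type terms. The expected shape is a Laurent polynomial in $z$ whose coefficients are $q$-binomials in $q^2$ times a power of $q$. This is a finite analogue of the computation in \cite{CLSXY2024} behind Theorem~\ref{th:CLSXY}.

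\textbf{Step 2: double sum.} Next I sum $q^{|\pi^{(2)}|}z^{\pm\omega(\pi^{(2)})}F_{m}(z^{\pm1};q)$ over strict $\pi^{(2)}$ with $\lrg\le N$ and $\len=m$. The signs are given by $t_1,t_2$. Partitions with $\lrg\le N$ and $\len=m$ are themselves subsets of $\{1,\dots,N\}$ of size $m$, so Step~1 refined by length gives a generating function for them as well. Extracting the coefficient of $z^s$ then yields a double (or triple) finite sum involving $\qbinom{N}{\cdot}_{q^2}$ and $\qbinom{\cdot}{\cdot}_{q}$.

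\textbf{Step 3: match with $\rT_0$ and $\rT_1$.} I would compare this sum with \eqref{eq:T0} and \eqref{eq:T1}. The factor $(-1)^l\qbinom{L}{l}_{q^2}$ suggests an inclusion–exclusion over pairs, and $\qbinom{2L-2l}{L-A-l}_q$ suggests a central binomial count. The cleanest route is probably not direct manipulation. Instead I would show that both sides satisfy the same recurrence in $N$ with the same initial values ($N=0,1$). On the combinatorial side, the recurrence comes from removing or adjoining the part $N$ in $\pi^{(2)}$, together with the induced change in the allowed range for $\pi^{(1)}$. On the trinomial side, I would use the Andrews--Baxter recurrences, e.g.
\begin{align*}
\rT_0(L,A;q)=\rT_0(L-1,A-1;q)+q^{L+A}\rT_1(L-1,A;q)+\rT_0(L-1,A+1;q)
\end{align*}
and the companion recurrence for $\rT_1$. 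These couple $\rT_0$ and $\rT_1$, which explains why \eqref{eq:GF-1} mixes both. The parity-dependent shifts, $s-N+2\lfloor N/2\rfloor$ and $(-1)^N$, reflect that adjoining the part $N$ changes $\omega$ by $\pm1$ according to parity.

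The main obstacle will be Step~3. The combinatorial decomposition must be chosen so that it closes up into exactly the coupled $\rT_0/\rT_1$ recurrence system, and the bookkeeping of the parity shifts in $A$ must be handled correctly. The extra term $q^{N-(-1)^Ns+1}\rT_0$ in \eqref{eq:GF-1} presumably accounts for the case $\lrg(\pi^{(1)})=\len(\pi^{(2)})+1$ allowed only when $a=1$. I would first try to peel that case off separately and verify it against the $\Lambda^{2,2}$ computation.
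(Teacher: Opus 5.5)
There is a genuine gap in Step~3, exactly at the obstacle you flag. Your overall strategy is the paper's: show both sides satisfy a common recurrence in $N$, check $N=0,1$, and use the Andrews--Baxter relations on the trinomial side. What is missing is the step that makes the combinatorial recurrence close.

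Write $\dcK^{(i)}_N$ for pairs of strict partitions with $\lrg(\pi^{(1)})\le\len(\pi^{(2)})+i$ and $\lrg(\pi^{(2)})\le N$, so that $\dcK^{(0)}=\Lambda^{2,2}$ and $\dcK^{(1)}=\Lambda^{1,2}$. Removing the part $N$ from $\pi^{(2)}$ lowers $\len(\pi^{(2)})$ by one. So the pairs in $\dcK^{(i)}_N$ with $\lrg(\pi^{(2)})=N$ correspond bijectively to $\dcK^{(i+1)}_{N-1}$. The offset $i$ keeps growing, and iterating this only re-sums over $\pi^{(2)}$, i.e.\ it returns you to Step~2.

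Moreover, adjoining $N$ as a new first row does not shift $\omega$ by $\pm1$. It pushes every old row down, so $\omega\mapsto\parity(N)-\omega$. Hence the natural weight is $x^{\omega(\pi^{(1)})+\omega(\pi^{(2)})}$ on $\dcK^{(0)}$ but $x^{\omega(\pi^{(1)})-\omega(\pi^{(2)})}$ on $\dcK^{(\pm1)}$; call the resulting generating functions $G^{(i)}_N(x)$.

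What closes the system is a second, size-preserving map that lowers $N$ \emph{without} raising the offset. For $\len(\pi^{(2)})=M\ge1$, delete the first column of $\pi^{(2)}$ and insert $M$ as a new largest part of $\pi^{(1)}$. This reverses the sign of the weight, and splitting according to whether $1\in\pi^{(2)}$ gives
\[
G_N^{(-1)}(x)=G_{N-1}^{(1)}(x^{-1})-G_{N-1}^{(-1)}(x^{-1})-1 .
\]
Combine this with $G_N^{(0)}(x)=G_{N-1}^{(0)}(x)+x^{\parity(N)}q^NG_{N-1}^{(1)}(x)$ and $G_N^{(-1)}(x)=G_{N-1}^{(-1)}(x)+x^{-\parity(N)}q^NG_{N-1}^{(0)}(x)$. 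You can then eliminate $G^{(\pm1)}$ and obtain a recurrence for $G^{(0)}_N(x)$ in terms of $G^{(0)}_{N-1}(x)$, $G^{(0)}_{N-1}(x^{-1})$ and $G^{(0)}_{N-2}(x)$. On the trinomial side, eliminate $\rT_1$ between Andrews--Baxter (2.16) and (2.19) to get a pure $\rT_0$ recurrence. The symmetry $\rT_0(N,s;q)=\rT_0(N,-s;q)$ then absorbs both the $x^{-1}$ terms and the parity shift $s-N+2\lfloor N/2\rfloor$.

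Steps~1--2 are correct (they are Berkovich--Uncu's formulas), but they are a detour. They lead exactly to the double-sum identities in the paper's closing section, whose direct $q$-hypergeometric proof is left there as an open question, and your Step~3 never uses them.

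Two further corrections. First, the recurrence you quote is wrong. Andrews--Baxter (2.19) gives $\rT_0(L,A;q)=\rT_0(L-1,A-1;q)+q^{L+A}\rT_1(L-1,A;q)+q^{2L+2A}\rT_0(L-1,A+1;q)$. For instance, $\rT_0(2,0;q)=1+q^2+q^4$, while your version yields $2+q^2$.

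Second, the extra term in the $\Lambda^{1,2}$ formula is not the contribution of $\lrg(\pi^{(1)})=\len(\pi^{(2)})+1$. For $N=1$, $s=0$, the pairs with $\lrg(\pi^{(1)})\le\len(\pi^{(2)})$ (with the $\Lambda^{1,2}$ weight) contribute $1+q^2$, not $\rT_1(1,0;q)=1$. No separate treatment of $\Lambda^{1,2}$ is needed. Apply the relation $G_N^{(0)}(x)=G_{N-1}^{(0)}(x)+x^{\parity(N)}q^NG_{N-1}^{(1)}(x)$ with $N+1$ in place of $N$. It expresses the $x^s$-coefficient of $G^{(1)}_N$ as $q^{-N-1}$ times the difference of the $x^{s+\parity(N+1)}$-coefficients of $G^{(0)}_{N+1}$ and $G^{(0)}_N$. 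Then (2.19) rewrites the resulting $\rT_0(N+1,\cdot)-\rT_0(N,\cdot)$ in the stated $\rT_1+q^{N-(-1)^Ns+1}\rT_0$ form.
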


\begin{remark}
	In view of \eqref{eq:GF-2}, each $q$-trinomial coefficient $\rT_0(N,s;q)$ counts a certain finite subclass of Kleshchev bipartitions in $\Lambda^{2,2}$, thereby indicating that it is a \emph{nonnegative polynomial} in $q$. This fact cannot be directly seen from the definition in \eqref{eq:T0} due to the alternating sign of the summands.
\end{remark}

\section{Functional equations for Kleshchev-type bipartitions}

To facilitate our analysis, we introduce three families of Kleshchev-type bipartitions in a slightly different narrative:

\begin{definition}
	For $i\in \{-1,0,1\}$, the set $\dcK^{(i)}$ consists of pairs $(\pi^{(1)}, \pi^{(2)})$ of \emph{strict} partitions such that
	\begin{align*}
		\lrg(\pi^{(1)}) \le \len(\pi^{(2)}) + i.
	\end{align*}
	We further denote by $\dcK_N^{(i)}$ the subset of $\dcK^{(i)}$ such that $\lrg(\pi^{(2)}) \le N$.
\end{definition}

\begin{remark}
	It is clear that $\dcK^{(0)}$ is exactly $\Lambda^{2,2}$, while $\dcK^{(1)}$ is $\Lambda^{1,2}$.
\end{remark}

Now let us define the generating functions
\begin{align*}
	G_N^{(i)}(x) = G_N^{(i)}(x,q) := \sum_{(\pi^{(1)}, \pi^{(2)}) \in \dcK_N^{(i)}} x^{\omega(\pi^{(1)})-\omega(\pi^{(2)})} q^{|\pi^{(1)}|+|\pi^{(2)}|}, \qquad (i= \pm 1).
\end{align*}
In addition, we define
\begin{align*}
	G_N^{(0), \pm}(x) = G_N^{(0), \pm}(x,q) := \sum_{(\pi^{(1)}, \pi^{(2)}) \in \dcK_N^{(0)}} x^{\omega(\pi^{(1)})\pm \omega(\pi^{(2)})} q^{|\pi^{(1)}|+|\pi^{(2)}|},
\end{align*}
and write
\begin{align*}
	G_N^{(0)}(x) = G_N^{(0)}(x,q) := G_N^{(0), +}(x,q).
\end{align*}

The objective of this section is to construct the following functional equations for the three generating functions.

\begin{lemma}\label{le:fnc}
	For $N\ge 1$,
	\begin{align}\label{eq:fnc-1}
		G_N^{(0)}(x) = G_{N-1}^{(0)}(x) + x^{\parity(N)} q^N G_{N-1}^{(1)}(x),
	\end{align}
	and
	\begin{align}\label{eq:fnc-2}
		G_N^{(-1)}(x) &= G_{N-1}^{(-1)}(x) + x^{-\parity(N)} q^N G_{N-1}^{(0)}(x),
	\end{align}
	where the parity function is given by
	\begin{align*}
		\parity(N) := \begin{cases}
			0, & \text{if $N$ is even},\\
			1, & \text{if $N$ is odd}.
		\end{cases}
	\end{align*}
	In addition,
	\begin{align}\label{eq:fnc-3}
		G_N^{(-1)}(x) = G_{N-1}^{(1)}(x^{-1}) - G_{N-1}^{(-1)}(x^{-1}) - 1.
	\end{align}
\end{lemma}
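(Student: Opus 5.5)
The plan is to prove all three identities combinatorially, working directly on the bipartitions in $\dcK^{(i)}_N$ and tracking how $\omega$ changes when a whole row or a whole column is removed from a Young diagram. Two elementary residue facts drive everything.

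First, a row of length $m$ placed as the top row of a diagram contributes $\parity(m)$ to $\omega$. Its residues are $(1-j)\bmod 2$ for $j=1,\dots,m$, that is $0,1,0,1,\ldots$.

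Second, deleting the top row of $\lambda$ shifts every remaining box up by one row. This flips all their residues, so if $\lambda=(m)\cup\mu$ with $m=\lrg(\lambda)$, then
\begin{align*}
\omega(\lambda)=\parity(m)-\omega(\mu).
\end{align*}
In the same way, deleting the first column (of length $\ell=\len(\lambda)$) flips the residues of the remaining boxes. Hence $\omega(\lambda)=\parity(\ell)-\omega(\lambda^-)$, where $\lambda^-$ is obtained by subtracting $1$ from every part.

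For \eqref{eq:fnc-1} and \eqref{eq:fnc-2}, I split $\dcK^{(i)}_N$ according to whether $N$ is a part of $\pi^{(2)}$. If $N$ is not a part, we get exactly $\dcK^{(i)}_{N-1}$. If $N$ is a part, write $\pi^{(2)}=(N)\cup\mu$ with $\mu$ strict and $\lrg(\mu)\le N-1$. Then $\len(\mu)=\len(\pi^{(2)})-1$, so the constraint $\lrg(\pi^{(1)})\le\len(\pi^{(2)})+i$ becomes $(\pi^{(1)},\mu)\in\dcK^{(i+1)}_{N-1}$. The size picks up $q^N$.

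By the row-deletion fact, $\omega(\pi^{(2)})=\parity(N)-\omega(\mu)$. For $G^{(0)}=G^{(0),+}$ the exponent is therefore $\omega(\pi^{(1)})-\omega(\mu)+\parity(N)$, which is the weight in $G^{(1)}_{N-1}$; this gives \eqref{eq:fnc-1}. For $G^{(-1)}$ the exponent is $\omega(\pi^{(1)})+\omega(\mu)-\parity(N)$, which is the weight in $G^{(0),+}_{N-1}$; this gives \eqref{eq:fnc-2}.

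For \eqref{eq:fnc-3}, note that $G^{(1)}_{N-1}(x^{-1})-G^{(-1)}_{N-1}(x^{-1})$ enumerates the pairs $(\sigma^{(1)},\sigma^{(2)})$ with $\lrg(\sigma^{(2)})\le N-1$ and $\lrg(\sigma^{(1)})\in\{\len(\sigma^{(2)}),\len(\sigma^{(2)})+1\}$, weighted by $x^{-(\omega(\sigma^{(1)})-\omega(\sigma^{(2)}))}$. The only such pair with $\sigma^{(1)}$ empty is the empty pair, and it accounts for the $-1$.

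So it suffices to build a weight- and size-preserving bijection from $\dcK^{(-1)}_N$ onto the pairs above with $\sigma^{(1)}\neq\emptyset$. Given $(\pi^{(1)},\pi^{(2)})\in\dcK^{(-1)}_N$, let $\ell=\len(\pi^{(2)})$; necessarily $\ell\ge1$. Define the map by
\begin{align*}
\sigma^{(2)}:=\pi^{(2)}\text{ with its first column removed},\qquad \sigma^{(1)}:=\pi^{(1)}\cup\{\ell\}.
\end{align*}
This removes $1$ from each part of $\pi^{(2)}$ and drops a possible $0$. Then $\sigma^{(2)}$ is strict with $\lrg\le N-1$, and $\sigma^{(1)}$ is strict because $\lrg(\pi^{(1)})\le\ell-1$. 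The size is preserved, since we removed $\ell$ boxes and added a part $\ell$. Also $\lrg(\sigma^{(1)})=\ell$ and $\len(\sigma^{(2)})\in\{\ell,\ell-1\}$, so the pair lands in the target set.

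For the weights, the column- and row-deletion facts give $\omega(\sigma^{(2)})=\parity(\ell)-\omega(\pi^{(2)})$ and $\omega(\sigma^{(1)})=\parity(\ell)-\omega(\pi^{(1)})$. Hence
\begin{align*}
\omega(\sigma^{(1)})-\omega(\sigma^{(2)})=-\bigl(\omega(\pi^{(1)})-\omega(\pi^{(2)})\bigr),
\end{align*}
which explains the substitution $x\mapsto x^{-1}$.

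The inverse map removes the largest part $\ell$ of $\sigma^{(1)}$ and adds $1$ to each part of $\sigma^{(2)}$. If $\len(\sigma^{(2)})=\ell-1$, it also appends a part $1$, which restores the length to $\ell$. The main obstacle is identifying this column-removal bijection and checking carefully that the two cases $\len(\sigma^{(2)})=\ell$ versus $\ell-1$ correspond exactly to whether $1$ is a part of $\pi^{(2)}$. I would sanity-check the construction on $N=1$, where both sides reduce to $x^{-1}q$.
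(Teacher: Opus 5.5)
Your proof is correct and matches the paper's proof. You use the same removal of the part $N$ from $\pi^{(2)}$ for the first two identities, and the same map for the third: delete the first column of $\pi^{(2)}$ and insert $\ell=\len(\pi^{(2)})$ as the new largest part of $\pi^{(1)}$. The differences are cosmetic: you read off $G^{(1)}_{N-1}-G^{(-1)}_{N-1}$ directly as the union of the two cases instead of splitting on whether $1$ is a part and telescoping through $G^{(0),-}_{N-1}$, and you justify the $\omega$ identities via row and column deletion, which the paper states without proof.
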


\begin{proof}
	The functional equations \eqref{eq:fnc-1} and \eqref{eq:fnc-2} are relatively easy. We start with a bipartition $(\pi^{(1)}, \pi^{(2)}) \in \dcK_N^{(-1)}$. If $\lrg(\pi^{(2)}) \le N-1$, then $(\pi^{(1)}, \pi^{(2)})$ also belongs to $\dcK_{N-1}^{(-1)}$, thereby giving the first term $G_{N-1}^{(-1)}(x)$ on the right-hand side of \eqref{eq:fnc-2}. If $\lrg(\pi^{(2)}) = N$, we preserve $\pi^{(1)}$ and remove the largest part $N$ in $\pi^{(2)}$ to get a new bipartition $(\widetilde{\pi}^{(1)}, \widetilde{\pi}^{(2)})$. In particular,
	\begin{align*}
		\lrg(\widetilde{\pi}^{(1)}) = \lrg(\pi^{(1)}) \le \len(\pi^{(2)})-1 = \len(\widetilde{\pi}^{(2)}),
	\end{align*}
	so that $(\widetilde{\pi}^{(1)}, \widetilde{\pi}^{(2)})\in \dcK_{N-1}^{(0)}$. Moreover,
	\begin{align*}
		\omega(\pi^{(1)})- \omega(\pi^{(2)}) = \omega(\widetilde{\pi}^{(1)})+ \omega(\widetilde{\pi}^{(2)}) - \parity(N).
	\end{align*}
	Hence, the second term $x^{-\parity(N)} q^N G_{N-1}^{(0)}(x)$ on the right-hand side of \eqref{eq:fnc-2} is derived. For \eqref{eq:fnc-1}, we apply a similar argument.
	
	Next we prove \eqref{eq:fnc-3}, which is the most intricate. Let $(\pi^{(1)}, \pi^{(2)}) \in \dcK_N^{(-1)}$ be such that $\len(\pi^{(2)}) = M \ge 1$. Then $\lrg(\pi^{(1)})\le M-1$. Now construct a new bipartition $(\widehat{\pi}^{(1)}, \widehat{\pi}^{(2)})$, where $\widehat{\pi}^{(1)}$ is obtained by prepending $M$ to $\pi^{(1)}$ as the largest part, and $\widehat{\pi}^{(2)}$ is obtained by deleting $1$ from each part of $\pi^{(2)}$. Thus,
	\begin{align*}
		\lrg(\widehat{\pi}^{(2)}) = \lrg(\pi^{(2)}) - 1 \le N-1.
	\end{align*}
	In addition,
	\begin{align*}
		|\pi^{(1)}|+|\pi^{(2)}| = |\widehat{\pi}^{(1)}|+|\widehat{\pi}^{(2)}|,
	\end{align*}
	and
	\begin{align*}
		\omega(\pi^{(1)}) - \omega(\pi^{(2)}) = -\omega(\widehat{\pi}^{(1)}) + \omega(\widehat{\pi}^{(2)}).
	\end{align*}
	For the moment, we have two cases. If $1$ is a part in $\pi^{(2)}$, then
	\begin{align*}
		\lrg(\widehat{\pi}^{(1)}) = M = \len(\pi^{(2)}) = \len(\widehat{\pi}^{(2)}) + 1.
	\end{align*}
	Now,
	\begin{align*}
		&\sum_{M\ge 1} \sum_{\substack{(\widehat{\pi}^{(1)}, \widehat{\pi}^{(2)})\\\lrg(\widehat{\pi}^{(2)})\le N-1 \\ \lrg(\widehat{\pi}^{(1)}) = M\\ \len(\widehat{\pi}^{(2)}) = M - 1}} x^{-\omega(\widehat{\pi}^{(1)})+\omega(\widehat{\pi}^{(2)})} q^{|\widehat{\pi}^{(1)}|+|\widehat{\pi}^{(2)}|}\\
		&\qquad = \sum_{M\ge 1} \left(\sum_{\substack{(\widehat{\pi}^{(1)}, \widehat{\pi}^{(2)})\in \dcK_{N-1}^{(1)}\\ \len(\widehat{\pi}^{(2)}) = M - 1}} - \sum_{\substack{(\widehat{\pi}^{(1)}, \widehat{\pi}^{(2)})\in \dcK_{N-1}^{(0)}\\ \len(\widehat{\pi}^{(2)}) = M - 1}}\right) x^{-\omega(\widehat{\pi}^{(1)})+\omega(\widehat{\pi}^{(2)})} q^{|\widehat{\pi}^{(1)}|+|\widehat{\pi}^{(2)}|}\\
		&\qquad = G_{N-1}^{(1)}(x^{-1}) - G_{N-1}^{(0),-}(x^{-1}).
	\end{align*}
	If $1$ is not a part in $\pi^{(2)}$, then
	\begin{align*}
		\lrg(\widehat{\pi}^{(1)}) = M = \len(\pi^{(2)}) = \len(\widehat{\pi}^{(2)}).
	\end{align*}
	Now,
	\begin{align*}
		&\sum_{M\ge 1} \sum_{\substack{(\widehat{\pi}^{(1)}, \widehat{\pi}^{(2)})\\\lrg(\widehat{\pi}^{(2)})\le N-1\\ \lrg(\widehat{\pi}^{(1)}) = M\\ \len(\widehat{\pi}^{(2)}) = M}} x^{-\omega(\widehat{\pi}^{(1)})+\omega(\widehat{\pi}^{(2)})} q^{|\widehat{\pi}^{(1)}|+|\widehat{\pi}^{(2)}|}\\
		&\qquad = \sum_{M\ge 1} \left(\sum_{\substack{(\widehat{\pi}^{(1)}, \widehat{\pi}^{(2)})\in \dcK_{N-1}^{(0)}\\ \len(\widehat{\pi}^{(2)}) = M}} - \sum_{\substack{(\widehat{\pi}^{(1)}, \widehat{\pi}^{(2)})\in \dcK_{N-1}^{(-1)}\\ \len(\widehat{\pi}^{(2)}) = M}}\right) x^{-\omega(\widehat{\pi}^{(1)})+\omega(\widehat{\pi}^{(2)})} q^{|\widehat{\pi}^{(1)}|+|\widehat{\pi}^{(2)}|}\\
		&\qquad = G_{N-1}^{(0),-}(x^{-1}) - 1 - G_{N-1}^{(-1)}(x^{-1}).
	\end{align*}
	It follows that
	\begin{align*}
		G_N^{(-1)}(x) = \left(G_{N-1}^{(1)}(x^{-1}) - G_{N-1}^{(0),-}(x^{-1})\right) + \left(G_{N-1}^{(0),-}(x^{-1}) - 1 - G_{N-1}^{(-1)}(x^{-1})\right),
	\end{align*}
	thereby producing the right-hand side of \eqref{eq:fnc-3}.
\end{proof}

With the functional equations in Lemma~\ref{le:fnc}, we may further derive a functional equation for $G_N^{(0)}(x)$ itself.

\begin{corollary}
	For $N\ge 2$,
	\begin{align}\label{eq:fnc-G0}
		G_N^{(0)}(x) &= G_{N-1}^{(0)}(x) + x^{2\parity(N)-1} q G_{N-1}^{(0)}(x) + x^{2\parity(N)} q^{2N} G_{N-1}^{(0)}(x^{-1})\notag\\
		&\quad + x^{2\parity(N)-1} q^{2N-1} G_{N-2}^{(0)}(x) - x^{2\parity(N)-1} q G_{N-2}^{(0)}(x).
	\end{align}
\end{corollary}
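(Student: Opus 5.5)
The plan is to eliminate the auxiliary families $G^{(1)}$ and $G^{(-1)}$ from the three functional equations of Lemma~\ref{le:fnc}, so that only $G^{(0)}$ at indices $N$, $N-1$, $N-2$ remains. Write $p=\parity(N)$ and note that $\parity(N-1)=1-p$.

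First, I will use \eqref{eq:fnc-1} to express $G^{(1)}$ through $G^{(0)}$. Applying it with $N$ and with $N-1$ gives
\begin{align*}
G_{N-1}^{(1)}(x) = x^{-p}q^{-N}\bigl(G_N^{(0)}(x)-G_{N-1}^{(0)}(x)\bigr),\qquad G_{N-2}^{(1)}(x) = x^{p-1}q^{-(N-1)}\bigl(G_{N-1}^{(0)}(x)-G_{N-2}^{(0)}(x)\bigr),
\end{align*}
and the same identities with $x$ replaced by $x^{-1}$.

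Second, I will remove $G^{(-1)}$. Equation \eqref{eq:fnc-3} can be rewritten as
\begin{align*}
H_N(x):=G_N^{(-1)}(x)+G_{N-1}^{(-1)}(x^{-1}) = G_{N-1}^{(1)}(x^{-1})-1,
\end{align*}
and this holds for every $N\ge 1$. I then form $H_N(x)-H_{N-1}(x^{-1})$. The cross terms $G_{N-1}^{(-1)}(x^{-1})$ cancel, leaving the telescoped difference
\begin{align*}
G_N^{(-1)}(x)-G_{N-2}^{(-1)}(x).
\end{align*}
By \eqref{eq:fnc-2} applied at $N$ and at $N-1$, this difference equals
\begin{align*}
x^{-p}q^N G_{N-1}^{(0)}(x) + x^{p-1}q^{N-1}G_{N-2}^{(0)}(x).
\end{align*}
On the other side, $H_N(x)-H_{N-1}(x^{-1}) = G_{N-1}^{(1)}(x^{-1}) - G_{N-2}^{(1)}(x)$, because the constants $-1$ cancel. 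This step needs $N-1\ge 1$, which is exactly why the corollary requires $N\ge 2$; the base objects $G_0^{(i)}$ are defined directly.

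Third, I substitute the first-step expressions for $G^{(1)}_{N-1}(x^{-1})$ and $G^{(1)}_{N-2}(x)$. This produces a single linear identity involving $G_N^{(0)}(x^{-1})$, $G_{N-1}^{(0)}(x^{-1})$, $G_{N-1}^{(0)}(x)$ and $G_{N-2}^{(0)}(x)$. Replacing $x$ by $x^{-1}$ throughout turns $G_N^{(0)}(x^{-1})$ into $G_N^{(0)}(x)$, and I then solve for it, multiplying through by $x^{p}q^N$. The coefficients should come out as stated: $x^{2p}q^{2N}$ in front of $G^{(0)}_{N-1}(x^{-1})$, $x^{2p-1}q^{2N-1}$ in front of $G^{(0)}_{N-2}(x)$, and $x^{2p-1}q$ in front of $\pm G_{N-1}^{(0)}(x)$ and $\mp G_{N-2}^{(0)}(x)$, the latter two coming from the $G^{(1)}_{N-2}$ term.

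The main obstacle is the bookkeeping of the sign of the exponent of $x$ under the inversion $x\mapsto x^{-1}$, together with the alternation of parity between $N$ and $N-1$. The step I would double-check first is that choosing the combination $H_N(x)-H_{N-1}(x^{-1})$, rather than $H_N(x)-H_{N-1}(x)$, is what makes the $G^{(-1)}$ terms cancel exactly.
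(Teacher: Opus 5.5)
Your elimination of $G^{(-1)}$ is valid, but the third step does not produce the stated identity. The coefficients you predict there are not what your combination actually yields. With $p=\parity(N)$, your intermediate identity is
\[
G_{N-1}^{(1)}(x^{-1})-G_{N-2}^{(1)}(x)=x^{-p}q^{N}G_{N-1}^{(0)}(x)+x^{p-1}q^{N-1}G_{N-2}^{(0)}(x).
\]
In it, $G_N^{(0)}$ appears only at $x^{-1}$, while $G_{N-2}^{(0)}$ appears only at $x$. No inversion can then put them at the same argument, as \eqref{eq:fnc-G0} requires. Carrying out your plan (substitute, invert, multiply by $x^{p}q^{N}$) actually gives
\begin{align*}
G_N^{(0)}(x)&=G_{N-1}^{(0)}(x)+xq\,G_{N-1}^{(0)}(x^{-1})+x^{2p}q^{2N}G_{N-1}^{(0)}(x^{-1})\\
&\quad+xq^{2N-1}G_{N-2}^{(0)}(x^{-1})-xq\,G_{N-2}^{(0)}(x^{-1}).
\end{align*}
This is true (it checks at $N=2$), but it is a different recurrence. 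The $G^{(1)}_{N-2}$ term contributes $xq\bigl(G^{(0)}_{N-1}(x^{-1})-G^{(0)}_{N-2}(x^{-1})\bigr)$, not $x^{2p-1}q\bigl(G^{(0)}_{N-1}(x)-G^{(0)}_{N-2}(x)\bigr)$. The last term is $xq^{2N-1}G^{(0)}_{N-2}(x^{-1})$, not $x^{2p-1}q^{2N-1}G^{(0)}_{N-2}(x)$. The two versions differ by a further relation that your argument never derives, so as written the proof does not establish the corollary.

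The step you flagged is exactly where it goes wrong, but the fix is the other choice: take $H_N(x)-H_{N-1}(x)$. The $G^{(-1)}$ terms still disappear, since
\begin{align*}
H_N(x)-H_{N-1}(x)&=\bigl[G_N^{(-1)}(x)-G_{N-1}^{(-1)}(x)\bigr]+\bigl[G_{N-1}^{(-1)}(x^{-1})-G_{N-2}^{(-1)}(x^{-1})\bigr]\\
&=x^{-p}q^{N}G_{N-1}^{(0)}(x)+x^{1-p}q^{N-1}G_{N-2}^{(0)}(x^{-1}).
\end{align*}
This uses \eqref{eq:fnc-2} at $N$ in the variable $x$ and at $N-1$ in the variable $x^{-1}$. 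The other side equals $G_{N-1}^{(1)}(x^{-1})-G_{N-2}^{(1)}(x^{-1})$. After substituting via \eqref{eq:fnc-1}, every $G^{(0)}$ term sits at $x^{-1}$ except a single $G_{N-1}^{(0)}(x)$. Inverting and multiplying by $x^pq^N$ then gives \eqref{eq:fnc-G0} with exactly the coefficients you listed. This is the paper's argument, phrased there as computing $G_{N-1}^{(-1)}(x)+G_{N-1}^{(-1)}(x^{-1})$ in two ways.

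Alternatively, you can keep your identity and convert it using
\[
G_{N-2}^{(1)}(x)-G_{N-2}^{(1)}(x^{-1})=q^{N-1}\bigl[x^{1-p}G_{N-2}^{(0)}(x^{-1})-x^{p-1}G_{N-2}^{(0)}(x)\bigr].
\]
This holds because, by \eqref{eq:fnc-3} and \eqref{eq:fnc-2} at $N-1$, $G_{N-2}^{(-1)}(x)+G_{N-2}^{(-1)}(x^{-1})$ equals $G_{N-2}^{(1)}(x^{-1})-1-x^{p-1}q^{N-1}G_{N-2}^{(0)}(x)$. The left-hand side is invariant under $x\leftrightarrow x^{-1}$, so the right-hand side must be too, which is exactly the displayed relation.
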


\begin{proof}
	We first rewrite \eqref{eq:fnc-2} as
	\begin{align}\label{eq:middle-a1}
		G_N^{(-1)}(x) - G_{N-1}^{(-1)}(x) = x^{-\parity(N)} q^N G_{N-1}^{(0)}(x).
	\end{align}
	Meanwhile, in \eqref{eq:fnc-3}, we substitute $G^{(1)}$ in terms of $G^{(0)}$ using \eqref{eq:fnc-1} and obtain
	\begin{align}\label{eq:middle-a2}
		G_N^{(-1)}(x) + G_{N-1}^{(-1)}(x^{-1}) = x^{\parity(N)} q^{-N} G_{N}^{(0)}(x^{-1}) - x^{\parity(N)} q^{-N} G_{N-1}^{(0)}(x^{-1}) - 1.
	\end{align}
	Taking the difference of \eqref{eq:middle-a2} and \eqref{eq:middle-a1},
	\begin{align}\label{eq:middle-b1}
		G_{N-1}^{(-1)}(x) + G_{N-1}^{(-1)}(x^{-1}) &= x^{\parity(N)} q^{-N} G_{N}^{(0)}(x^{-1}) - x^{\parity(N)} q^{-N} G_{N-1}^{(0)}(x^{-1})\notag\\
		&\quad - x^{-\parity(N)} q^N G_{N-1}^{(0)}(x) - 1.
	\end{align}
	Also, replacing $x$ with $x^{-1}$ in \eqref{eq:middle-a1} and then summing with \eqref{eq:middle-a2},
	\begin{align}\label{eq:middle-b2}
		G_{N}^{(-1)}(x) + G_{N}^{(-1)}(x^{-1}) &= x^{\parity(N)} q^{-N} G_{N}^{(0)}(x^{-1}) - x^{\parity(N)} q^{-N} G_{N-1}^{(0)}(x^{-1})\notag\\
		&\quad + x^{\parity(N)} q^N G_{N-1}^{(0)}(x^{-1}) - 1.
	\end{align}
	Finally, in \eqref{eq:middle-b2}, we substitute $N$ by $N-1$, and then substract it from \eqref{eq:middle-b1}. Now replacing $x$ with $x^{-1}$, the desired functional equation \eqref{eq:fnc-G0} for $G_N^{(0)}(x)$ follows.
\end{proof}

\section{Proof of the main result}

\subsection{Kleshchev bipartition set $\Lambda^{2,2}$}

For the moment, let us prove \eqref{eq:GF-2}. We start with a direct verification that \eqref{eq:GF-2} holds for $N=0$ and $1$. Write
\begin{align*}
	G_N^{(0)}(x) = \sum_{s=-\infty}^\infty c_{s,N} x^s.
\end{align*}
Then
\begin{align*}
	\sum_{\substack{\pi \in \Lambda^{2,2}\\\lrg(\pi)\le N\\ \omega(\pi) = s}} q^{|\pi|} = c_{s,N}.
\end{align*}

In light of the functional equation \eqref{eq:fnc-G0}, we have a recurrence for $c_{s,N}$, separated into two cases according to the parity of $N$:
\begin{align}\label{eq:rec-even}
	c_{s,2n} = c_{s,2n-1} + q c_{s+1,2n-1} + q^{4n} c_{-s,2n-1} + q^{4n-1} c_{s+1,2n-2} - q c_{s+1,2n-2},
\end{align}
and
\begin{align}\label{eq:rec-odd}
	c_{s,2n+1} &= c_{s,2n} + q c_{s-1,2n} + q^{4n+2} c_{-s+2,2n} + q^{4n+1} c_{s-1,2n-1} - q c_{s-1,2n-1}.
\end{align}
To show
\begin{align*}
	c_{s,N} = q^{s(s-1)}\Tfnc{0}{N}{s-N+2\lfloor\tfrac{N}{2}\rfloor}{q},
\end{align*}
it is sufficient to verify that the right-hand side of the above also satisfies \eqref{eq:rec-even} and \eqref{eq:rec-odd}. For $N=2n$, we are supposed to show
\begin{align*}
	\Tfnc{0}{2n}{s}{q} &= \Tfnc{0}{2n-1}{s-1}{q} + q^{2s+1} \Tfnc{0}{2n-1}{s}{q} + q^{4n+2s} \Tfnc{0}{2n-1}{s+1}{q}\\
	&\quad+ q^{4n+2s-1} \Tfnc{0}{2n-2}{s+1}{q} - q^{2s+1} \Tfnc{0}{2n-2}{s+1}{q}.
\end{align*}
For $N=2n+1$, we are supposed to show
\begin{align*}
	\Tfnc{0}{2n+1}{s}{q} &= \Tfnc{0}{2n}{s+1}{q} + q^{-2s+1} \Tfnc{0}{2n}{s}{q} + q^{4n-2s+2} \Tfnc{0}{2n}{s-1}{q}\\
	&\quad+ q^{4n-2s+1} \Tfnc{0}{2n-1}{s-1}{q} - q^{-2s+1} \Tfnc{0}{2n-1}{s-1}{q}.
\end{align*}
Here we make the substitution $s\mapsto -s$, and note from \eqref{eq:T0} the symmetry
\begin{align*}
	\Tfnc{0}{N}{s}{q} = \Tfnc{0}{N}{-s}{q}.
\end{align*}
Then the previous relation is equivalent to
\begin{align*}
	\Tfnc{0}{2n+1}{s}{q} &= \Tfnc{0}{2n}{s-1}{q} + q^{2s+1} \Tfnc{0}{2n}{s}{q} + q^{4n+2s+2} \Tfnc{0}{2n}{s+1}{q}\\
	&\quad+ q^{4n+2s+1} \Tfnc{0}{2n-1}{s+1}{q} - q^{2s+1} \Tfnc{0}{2n-1}{s+1}{q}.
\end{align*}
Overall, we only need to prove 
\begin{align}\label{eq:rec-T0-to-show}
	\Tfnc{0}{N}{s}{q} &= \Tfnc{0}{N-1}{s-1}{q} + q^{2s+1} \Tfnc{0}{N-1}{s}{q} + q^{2N+2s} \Tfnc{0}{N-1}{s+1}{q}\notag\\
	&\quad+ q^{2N+2s-1} \Tfnc{0}{N-2}{s+1}{q} - q^{2s+1} \Tfnc{0}{N-2}{s+1}{q}.
\end{align}

Let us recall two recurrences for the $q$-trinomial coefficients $\rT_0(N,s;q)$ and $\rT_1(N,s;q)$ established by Andrews and Baxter~\cite{AB1987}.

\begin{lemma}
	For $s\in \mathbb{Z}$,
	\begin{align}\label{eq:T-rec-1}
		\Tfnc{1}{N-1}{s}{q} &= \Tfnc{1}{N-2}{s}{q} + q^{N+s-1} \Tfnc{0}{N-2}{s+1}{q}\notag\\
		&\quad + q^{N-s-1} \Tfnc{0}{N-2}{s-1}{q},
	\end{align}
	and
	\begin{align}\label{eq:T-rec-2}
		\Tfnc{1}{N-1}{s}{q} &= q^{-N-s} \Tfnc{0}{N}{s}{q} - q^{-N-s} \Tfnc{0}{N-1}{s-1}{q}\notag\\
		&\quad - q^{N+s} \Tfnc{0}{N-1}{s+1}{q}.
	\end{align}
\end{lemma}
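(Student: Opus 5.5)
Both identities are recurrences that Andrews and Baxter derived in \cite{AB1987}. In their notation, with $L=N-1$ and $A=s$, \eqref{eq:T-rec-1} reads
\begin{align*}
\rT_1(L,A)=\rT_1(L-1,A)+q^{L+A}\rT_0(L-1,A+1)+q^{L-A}\rT_0(L-1,A-1).
\end{align*}
Multiplying \eqref{eq:T-rec-2} by $q^{N+s}$ turns it into
\begin{align*}
\rT_0(N,s)=\rT_0(N-1,s-1)+q^{N+s}\rT_1(N-1,s)+q^{2N+2s}\rT_0(N-1,s+1),
\end{align*}
which is the companion recurrence expressing $\rT_0$ at level $N$ through level $N-1$. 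So the shortest route is to locate these two identities in \cite[Section~2]{AB1987}. I would still like a self-contained verification from the definitions \eqref{eq:T0} and \eqref{eq:T1}, and I would organise it as follows.

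For \eqref{eq:T-rec-2}, in its rearranged form above, I would start from the summand of $\rT_0(N,s)$, namely $(-1)^l\tqbinom{N}{l}_{q^2}\tqbinom{2N-2l}{N-s-l}_q$. The $q^2$-binomial is split by the Pascal rule $\tqbinom{N}{l}_{q^2}=\tqbinom{N-1}{l-1}_{q^2}+q^{2l}\tqbinom{N-1}{l}_{q^2}$, chosen so that a factor $q^{l}$ appears and converts $(-1)^l$ into the $(-q)^l$ weight of $\rT_1$. The $q$-binomial $\tqbinom{2N-2l}{N-s-l}_q$ is reduced twice, from top index $2N-2l$ to $2(N-1-l)$. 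Each step uses one of the two Pascal rules,
\begin{align*}
\qbinom{m}{k}_q=\qbinom{m-1}{k-1}_q+q^{k}\qbinom{m-1}{k}_q=\qbinom{m-1}{k}_q+q^{m-k}\qbinom{m-1}{k-1}_q.
\end{align*}
The reductions produce four pieces with bottom indices $N-s-l-2$, $N-s-l-1$ (twice) and $N-s-l$. These match the summands of $\rT_0(N-1,s+1)$, of $\rT_1(N-1,s)$, and of $\rT_0(N-1,s-1)$. The piece coming from $\tqbinom{N-1}{l-1}_{q^2}$ is reindexed by $l\mapsto l+1$, which shifts the top index of the $q$-binomial by $2$ and flips the sign.

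For \eqref{eq:T-rec-1}, the same device applies to $\rT_1(N-1,s)$. I split $\tqbinom{N-1}{l}_{q^2}$ as above and reduce the $q$-binomial once. The term $\rT_1(N-2,s)$ collects the contributions that keep $\tqbinom{N-2}{l}_{q^2}$. The two $\rT_0$ terms, with prefactors $q^{N+s-1}$ and $q^{N-s-1}$, should come from the reindexed $\tqbinom{N-2}{l-1}_{q^2}$ pieces, symmetrised using $\rT_0(L,A)=\rT_0(L,-A)$.

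The main obstacle is the bookkeeping of the powers of $q$. The Pascal splittings must be chosen so that the $q$-exponents combine into exactly $q^{N\pm s}$ times the correct sign weight. If the na\"{\i}ve choice leaves stray terms, I would use the fact that all terms have finite support in $l$ and show that the leftover terms cancel in telescoping pairs after the shift $l\mapsto l+1$. As a fallback, both sides are finite single sums of $q$-hypergeometric terms, so each identity can be certified mechanically by $q$-Zeilberger creative telescoping, with the certificate checked by rational-function arithmetic.
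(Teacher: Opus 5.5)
Your proposal is correct and takes the same route as the paper, which proves this lemma only by citing Andrews--Baxter \cite[eqs.~(2.16) and (2.19)]{AB1987}; your restatement with $(L,A)=(N-1,s)$, and your rearrangement of \eqref{eq:T-rec-2} as $\rT_0(N,s)=\rT_0(N-1,s-1)+q^{N+s}\rT_1(N-1,s)+q^{2N+2s}\rT_0(N-1,s+1)$, are exactly those recurrences. The optional self-contained Pascal-rule sketch is not needed, and it would not close as described: the natural splittings leave pieces such as $-\rT_0(N-1,s)$ and $\sum_l(-1)^l q^{2l}\tqbinom{N-1}{l}_{q^2}\tqbinom{2N-2-2l}{N-s-l}_q$, which are neither $\rT_0$ nor $\rT_1$ sums, so that route would need an auxiliary identity or your $q$-Zeilberger fallback.
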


\begin{proof}
	For \eqref{eq:T-rec-1}, see \cite[p.~300, eq.~(2.16)]{AB1987}; for \eqref{eq:T-rec-2}, see \cite[p.~301, eq.~(2.19)]{AB1987}.
\end{proof}

In \eqref{eq:T-rec-2}, we substitute $N$ by $N-1$, replace $s$ with $-s$, and invoke the symmetries
\begin{align*}
	\Tfnc{0}{N}{s}{q} = \Tfnc{0}{N}{-s}{q}, \qquad\qquad \Tfnc{1}{N}{s}{q} = \Tfnc{1}{N}{-s}{q}.
\end{align*}
Then
\begin{align}\label{eq:T-rec-2b}
	\Tfnc{1}{N-2}{s}{q} &= q^{-N+s+1} \Tfnc{0}{N-1}{s}{q} - q^{-N+s+1} \Tfnc{0}{N-2}{s+1}{q}\notag\\
	&\quad - q^{N-s-1} \Tfnc{0}{N-2}{s-1}{q}.
\end{align}
Finally, in \eqref{eq:T-rec-1}, we substitute the $\rT_1$ term on the left-hand side using \eqref{eq:T-rec-2}, and the $\rT_1$ term on the right-hand side using \eqref{eq:T-rec-2b}. The required relation \eqref{eq:rec-T0-to-show} immediately follows, thereby completing the proof of \eqref{eq:GF-2}.

\subsection{Kleshchev bipartition set $\Lambda^{1,2}$}

Note that
\begin{align*}
	G_N^{(1)}(x) = \sum_{s=-\infty}^\infty x^s \sum_{\substack{\pi \in \Lambda^{1,2}\\\lrg(\pi)\le N\\ \omega(\pi) = s}} q^{|\pi|}.
\end{align*}
In view of \eqref{eq:fnc-1}, we have
\begin{align*}
	\sum_{\substack{\pi \in \Lambda^{1,2}\\\lrg(\pi)\le N\\ \omega(\pi) = s}} q^{|\pi|} = q^{-N-1} (c_{s+\parity(N+1),N+1} - c_{s+\parity(N+1),N}),
\end{align*}
Since
\begin{align*}
	c_{s,N} = \sum_{\substack{\pi \in \Lambda^{2,2}\\\lrg(\pi)\le N\\ \omega(\pi) = s}} q^{|\pi|} = q^{s(s-1)}\Tfnc{0}{N}{s-N+2\lfloor\tfrac{N}{2}\rfloor}{q},
\end{align*}
we have
\begin{align}\label{eq:GF-1b} 
	\sum_{\substack{\pi \in \Lambda^{1,2}\\\lrg(\pi)\le N\\ \omega(\pi) = s}} q^{|\pi|} = q^{s^2+(-1)^N s-N-1} \left[\Tfnc{0}{N+1}{s}{q} - \Tfnc{0}{N}{s+(-1)^N}{q}\right].
\end{align}
While this relation is true, it is not well behaved, especially when we take the limit at $N\to \infty$. Now we derive the claimed identity \eqref{eq:GF-1} from \eqref{eq:GF-1b} to have a better expression for the desired counting function.

For $N=2n$, we have
\begin{align*}
	\RHS\eqref{eq:GF-1b} &= q^{s(s+1)-2n-1} \left[\Tfnc{0}{2n+1}{s}{q} - \Tfnc{0}{2n}{s+1}{q}\right]\\
	&= q^{s(s+1)-2n-1} \left[\Tfnc{0}{2n+1}{-s}{q} - \Tfnc{0}{2n}{-s-1}{q}\right].
\end{align*}
Applying \eqref{eq:T-rec-2},
\begin{align*}
	\RHS\eqref{eq:GF-1b} &= q^{s(s+1)-2n-1} \left[q^{2n-s+1}\Tfnc{1}{2n}{-s}{q} + q^{4n-2s+2} \Tfnc{0}{2n}{-s+1}{q}\right]\\
	&= q^{s^2}\left[\Tfnc{1}{2n}{s}{q} + q^{2n-s+1}\Tfnc{0}{2n}{s-1}{q}\right].
\end{align*}
For $N=2n+1$, we have, again with \eqref{eq:T-rec-2} used, that
\begin{align*}
	\RHS\eqref{eq:GF-1b} &= q^{s(s-1)-2n-2} \left[\Tfnc{0}{2n+2}{s}{q} - \Tfnc{0}{2n+1}{s-1}{q}\right]\\
	&= q^{s(s-1)-2n-2} \left[q^{2n+s+2}\Tfnc{1}{2n+1}{s}{q} + q^{4n+2s+4} \Tfnc{0}{2n+1}{s+1}{q}\right]\\
	&= q^{s^2}\left[\Tfnc{1}{2n+1}{s}{q} + q^{2n+s+2}\Tfnc{0}{2n+1}{s+1}{q}\right].
\end{align*}
Hence, \eqref{eq:GF-1} follows.

\section{Closing remarks}

Let $\dcD$ denote the set of strict partitions. Define the generating function
\begin{align*}
	\Sigma_N(x,q) := \sum_{\substack{\lambda\in \dcD\\ \lrg(\lambda) \le N}} x^{\omega(\lambda)} q^{|\lambda|}.
\end{align*}
Berkovich and Uncu~\cite[p.~13, Theorem~3.1]{BU2016} derived the following identity:

\begin{lemma}
	For $N\in \mathbb{Z}_{\ge 0}$,
	\begin{align}\label{eq:Sigma}
		\Sigma_N(x,q) = \sum_{i=-\infty}^\infty x^i q^{2i^2-i} \qbinom{N}{\lfloor\frac{N}{2}\rfloor+i}_{q^2}.
	\end{align}
\end{lemma}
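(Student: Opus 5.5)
We prove \eqref{eq:Sigma} by induction on $N$, peeling off the two largest possible parts $N$ and $N-1$ at once, since adding a part changes $\omega$ in a way that depends only on its parity. A part $m$ placed as a row of a strict partition shifts its residue pattern along that row. It is cleaner to record $\omega$ through the standard reformulation: the row of length $\lambda_k$ in row $k$ contributes $\sum_{j=1}^{\lambda_k}(-1)^{k-j}$, which is $0$ if $\lambda_k$ is even and $(-1)^{k-1}$ if $\lambda_k$ is odd. Hence
\[
\omega(\lambda)=\sum_{k:\ \lambda_k \text{ odd}} (-1)^{k-1},
\]
with rows indexed from the largest part. This formula is the first thing we establish; it is a one-line check from the definition of $\Res_\lambda$.

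With this formula, inserting a new largest part $N$ into a strict partition $\lambda$ with $\lrg(\lambda)\le N-1$ shifts every row index by one. The effect is
\[
\omega(\lambda)\mapsto \parity(N)-\omega(\lambda).
\]
This gives the functional equation
\[
\Sigma_N(x,q)=\Sigma_{N-1}(x,q)+x^{\parity(N)}q^N\,\Sigma_{N-1}(x^{-1},q),
\]
exactly parallel to Lemma~\ref{le:fnc}. Write $\Sigma_N(x,q)=\sum_i a_{i,N}x^i$. This becomes
\[
a_{i,N}=a_{i,N-1}+q^N a_{\parity(N)-i,\,N-1}.
\]
Together with $\Sigma_0=1$, this determines everything.

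It then suffices to check that $a_{i,N}=q^{2i^2-i}\qbinom{N}{\lfloor N/2\rfloor+i}_{q^2}$ satisfies the recurrence, splitting by the parity of $N$.

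For $N=2n$, the claim reads
\[
q^{2i^2-i}\qbinom{2n}{n+i}_{q^2}=q^{2i^2-i}\qbinom{2n-1}{n-1+i}_{q^2}+q^{2n+2i^2+i}\qbinom{2n-1}{n-1-i}_{q^2}.
\]
Using $\qbinom{2n-1}{n-1-i}=\qbinom{2n-1}{n+i}$, this is the $q^2$-Pascal rule
\[
\qbinom{M}{k}_{p}=\qbinom{M-1}{k-1}_{p}+p^{k}\qbinom{M-1}{k}_{p}
\]
with $p=q^2$, $M=2n$, $k=n+i$, since $p^{k}=q^{2n+2i}$.

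For $N=2n+1$, the index $\parity(N)-i=1-i$ gives the weight
\[
q^{2n+1}q^{2(1-i)^2-(1-i)}=q^{2i^2-i}\cdot q^{2n+2-2i}.
\]
The claim then becomes
\[
\qbinom{2n+1}{n+i}=\qbinom{2n}{n+i}+q^{2(n+1-i)}\qbinom{2n}{n-i+1}.
\]
Rewriting $\qbinom{2n}{n-i+1}=\qbinom{2n}{n+i-1}$, this is the other Pascal rule
\[
\qbinom{M}{k}=p^{M-k}\qbinom{M-1}{k-1}+\qbinom{M-1}{k}.
\]

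The main obstacle is the bookkeeping of exponents in the odd case, where the shift $i\mapsto 1-i$ must match $q^{2i^2-i}$ with the correct Pascal rule. The sign convention in the row-parity formula for $\omega$ (rows counted from the top, with $\Res$ equal to $(i-j)\bmod 2$) must also be fixed consistently. A check at $N=1$ gives $\Sigma_1=1+xq$, which matches $i=0,1$ in \eqref{eq:Sigma} (namely $q^{0}$ and $xq^{1}$). This confirms the sign choice.
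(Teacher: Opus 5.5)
Your proof is correct, and it takes a genuinely different route from the paper. The paper gives no argument for \eqref{eq:Sigma}; it simply quotes Berkovich--Uncu~\cite[p.~13, Theorem~3.1]{BU2016}.

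You instead make the lemma self-contained. The row-parity formula $\omega(\lambda)=\sum_{\lambda_k\ \text{odd}}(-1)^{k-1}$ turns insertion of a new largest part $N$ into $\omega\mapsto\parity(N)-\omega$. This gives $\Sigma_N(x,q)=\Sigma_{N-1}(x,q)+x^{\parity(N)}q^N\Sigma_{N-1}(x^{-1},q)$. After the symmetry $\qbinom{M}{k}_{q^2}=\qbinom{M}{M-k}_{q^2}$, the closed form then reduces to the two $q^2$-Pascal rules. I checked the bookkeeping and it is right:
\begin{itemize}
\item for $N=2n$, $q^{2n}q^{2i^2+i}=q^{2i^2-i}(q^2)^{n+i}$;
\item for $N=2n+1$, $q^{2n+1}q^{2(1-i)^2-(1-i)}=q^{2i^2-i}(q^2)^{n+1-i}$, where $n+1-i=M-k$ for $M=2n+1$, $k=n+i$.
\end{itemize}

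Your argument is the paper's own method for $G_N^{(0)}$ specialised to a single strict partition: Lemma~\ref{le:fnc}, followed by checking that the closed form satisfies the induced coefficient recurrence. It costs a few lines and removes the dependence on \cite{BU2016} for this lemma. What the citation buys instead is brevity, plus a framework that also tracks the length, which the paper needs for Lemma~\ref{le:Psi}. To reach that, your argument would have to be upgraded to $\Psi_{M,N}(x)=\Psi_{M,N-1}(x)+x^{\parity(N)}q^N\Psi_{M-1,N-1}(x^{-1})$, with a correspondingly harder binomial verification.

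Minor points:
\begin{itemize}
\item Your first sentence announces peeling off $N$ and $N-1$ together, but the argument (rightly) removes only the part $N$.
\item State the base case $N=0$ explicitly: $\qbinom{0}{i}_{q^2}=\delta_{i,0}$, so the right-hand side is $1=\Sigma_0$.
\item Symmetry and Pascal are used for all $k\in\mathbb{Z}$. That is fine, since $\qbinom{M}{k}=0$ for $k\notin[0,M]$ when $M\ge 0$, in line with the paper's definition.
\item The $N=1$ check cannot detect the row-ordering convention, because there is only one row. The step $\omega\mapsto\parity(N)-\omega$ relies on rows being counted from the largest part. That is confirmed by the paper's example $(5,3,2,2)$, or by the check $\Sigma_2=1+xq+q^2+x^{-1}q^3$, where the term $x^{-1}q^3$ comes from $(2,1)$ and would become $xq^3$ under the opposite ordering.
\end{itemize}
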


We further introduce
\begin{align*}
	\Psi_{M,N}(x,q) := \sum_{\substack{\lambda\in \dcD\\ \len(\lambda) = M\\ \lrg(\lambda) \le N}} x^{\omega(\lambda)} q^{|\lambda|}.
\end{align*}
A consequence of \cite[p.~27, Proposition~7.2]{BU2016} is as follows:

\begin{lemma}\label{le:Psi}
	For $M,N\in \mathbb{Z}_{\ge 0}$ with $N\ge M$,
	\begin{align}\label{eq:Psi}
		\Psi_{M,N}(x,q) &= x^{(-1)^{M-1} \lceil\frac{M}{2}\rceil} q^{\binom{M+1}{2}} \sum_{j=0}^M (x^{(-1)^M} q)^j\notag\\
		&\quad\times \qbinom{\lceil\frac{N-M}{2}\rceil + j-1}{j}_{q^2} \qbinom{\lfloor\frac{N-M}{2}\rfloor + M-j}{M-j}_{q^2}.
	\end{align}
\end{lemma}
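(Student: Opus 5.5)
We would prove Lemma~\ref{le:Psi} directly, by induction on $N$, rather than extract it from \cite[Proposition~7.2]{BU2016}. The plan is to find a first-order recurrence that $\Psi_{M,N}$ satisfies combinatorially, and then check that the right-hand side of \eqref{eq:Psi} satisfies the same recurrence and the same initial values.

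\textbf{Step 1 (a row formula for $\omega$).} Row $i$ of $\lambda$ has boxes $(i,1),\dots,(i,\lambda_i)$. Their contributions $(-1)^{i-j}$ to $\omega$ alternate, so row $i$ contributes $(-1)^{i-1}$ if $\lambda_i$ is odd and $0$ otherwise. Hence
\begin{align*}
	\omega(\lambda)=\sum_{i:\ \lambda_i \text{ odd}} (-1)^{i-1}.
\end{align*}
This makes $\omega$ transparent under deleting the largest part. Let $\lambda\in\dcD$ have $\len(\lambda)=M$ and $\lrg(\lambda)=N$, and delete the part $N$ to get $\lambda'$. Every remaining row moves up by one, so its sign flips, and
\begin{align*}
	\omega(\lambda)=\parity(N)-\omega(\lambda').
\end{align*}
This is the same mechanism used in the proof of Lemma~\ref{le:fnc}.

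\textbf{Step 2 (the recurrence).} Split according to whether $N$ is a part. This gives, for $N\ge 1$,
\begin{align*}
	\Psi_{M,N}(x,q)=\Psi_{M,N-1}(x,q)+x^{\parity(N)}q^{N}\,\Psi_{M-1,N-1}(x^{-1},q),
\end{align*}
where $\Psi_{M,N-1}=0$ if $N-1<M$. The initial values are $\Psi_{0,N}=1$, and for $N=M$ the single partition $(M,M-1,\dots,1)$. The staircase has $\lceil M/2\rceil$ odd parts, all in rows $i\equiv M \pmod 2$, so
\begin{align*}
	\omega\big((M,M-1,\dots,1)\big)=(-1)^{M-1}\lceil M/2\rceil .
\end{align*}
This matches \eqref{eq:Psi} at $N=M$: there the first $q^2$-binomial vanishes unless $j=0$, leaving $x^{(-1)^{M-1}\lceil M/2\rceil}q^{\binom{M+1}{2}}$.

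\textbf{Step 3 (verifying the formula), the main obstacle.} Denote the right-hand side of \eqref{eq:Psi} by $R_{M,N}(x)$. We must show
\begin{align*}
	R_{M,N}(x)-R_{M,N-1}(x)=x^{\parity(N)}q^N R_{M-1,N-1}(x^{-1}).
\end{align*}
We would split into two cases by the parity of $N-M$, since passing from $N-1$ to $N$ increases exactly one of $\lceil\frac{N-M}{2}\rceil$ and $\lfloor\frac{N-M}{2}\rfloor$.
\begin{itemize}
	\item In each case, apply the $q^2$-Pascal rule to the binomial that increases. This writes the difference $R_{M,N}-R_{M,N-1}$ as a single $j$-sum carrying an extra power of $q^2$.
	\item Match that sum termwise with $R_{M-1,N-1}(x^{-1})$. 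Replacing $M$ by $M-1$ swaps the roles of $\lceil\cdot\rceil$ and $\lfloor\cdot\rfloor$, and the substitution $x\mapsto x^{-1}$ compensates the sign change in $x^{(-1)^M}$. In one parity case the match should need the shift $j\mapsto j-1$, absorbing the factor $x^{(-1)^M}q$.
	\item Check that the prefactors agree: $x^{(-1)^{M-1}\lceil M/2\rceil}$ against $x^{\parity(N)}\,x^{-(-1)^{M-2}\lceil (M-1)/2\rceil}$, and $q^{\binom{M+1}{2}}$ against $q^{N+\binom{M}{2}}$ together with the $q^{2(\cdot)}$ factor produced by the Pascal rule.
\end{itemize}
The exponent bookkeeping for $x$ across the four parity combinations of $(M,N)$ is where errors are most likely, and we expect this to be the hardest part.

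If the direct termwise match fails, the fallback is to use both forms of the $q^2$-Pascal rule, $\qbinom{n}{k}=\qbinom{n-1}{k-1}+q^{2k}\qbinom{n-1}{k}$ and $\qbinom{n}{k}=q^{2(n-k)}\qbinom{n-1}{k-1}+\qbinom{n-1}{k}$ (both in base $q^2$). We would choose, for each parity case, the form whose power of $q$ combines with $q^N$ and $q^{\binom{M}{2}}$ into $q^{\binom{M+1}{2}}$. Alternatively, we would specialize to small $M$ (for instance $M=1$, where $\Psi_{1,N}=\sum_{k\le N}x^{\parity(k)}q^k$) to pin down the correct choice before doing the general case.
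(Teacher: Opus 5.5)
Your proof is correct, but it takes a different route from the paper. The paper gives no argument of its own for Lemma~\ref{le:Psi}; it imports the formula as a consequence of \cite[Proposition~7.2]{BU2016}.

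Your induction on $N$ deletes the largest part and uses $\omega(\lambda)=\parity(N)-\omega(\lambda')$, the same mechanism as in Lemma~\ref{le:fnc}. This makes the lemma self-contained and needs only the $q^2$-Pascal rule. What it does not give is any explanation of where the closed form comes from: it only verifies a formula known in advance. The citation, by contrast, places the formula inside Berkovich--Uncu's more general results.

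Step~3 does go through, with one correction to your heuristic. Since $(N-1)-(M-1)=N-M$, the parameters $a=\lceil\frac{N-M}{2}\rceil$ and $b=\lfloor\frac{N-M}{2}\rfloor$ in $R_{M-1,N-1}$ are the same as in $R_{M,N}$. Nothing swaps; only the lower index $M-j$ becomes $M-1-j$.

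In both parity cases the Pascal form that works is $\qbinom{n}{k}-\qbinom{n-1}{k}=q^{2(n-k)}\qbinom{n-1}{k-1}$ (base $q^2$):
\begin{itemize}
\item When $N-M$ is even, it acts on the second binomial and produces $q^{N-M}$. The resulting sum matches $R_{M-1,N-1}(x^{-1})$ termwise.
\item When $N-M$ is odd, it acts on the first binomial and produces $q^{N-M-1}$. Reindexing by one then pulls out exactly one factor $x^{(-1)^M}q$.
\end{itemize}
The $x$-prefactors agree because $(-1)^{M-1}\lceil\frac M2\rceil+(-1)^{M}\lceil\frac{M-1}2\rceil=\parity(M)$. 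So the remaining discrepancy $x^{\parity(N)-\parity(M)}$ is $1$ when $N-M$ is even, and $x^{(-1)^M}$ when $N-M$ is odd, which is precisely the factor supplied by the shift. Together with your base cases $M=0$ and $N=M$, this completes the induction over $N>M\ge 1$.
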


It is clear that
\begin{align*}
	\sum_{\substack{\pi \in \Lambda^{2,2}\\\lrg(\pi)\le N\\ \omega(\pi) = s}} q^{|\pi|} = \sum_{M=0}^N \Sigma_M(x,q) \Psi_{M,N}(x,q).
\end{align*}
Now splitting the sum according to the parity of $M$, and then invoking \eqref{eq:Sigma} and \eqref{eq:Psi}, our relation \eqref{eq:GF-2} implies two $q$-identities:

\begin{theorem}
	For $n\in \mathbb{Z}_{\ge 0}$ and $s\in \mathbb{Z}$,
	\begin{align}\label{eq:q-id-1}
		&q^{s(s-1)}\Tfnc{0}{2n}{s}{q}\notag\\
		&\qquad= \sum_{m=0}^n \sum_{j=0}^{2m} q^{P_s(j,2m)} \tqbinom{2m}{-s+j}_{q^2} \tqbinom{n-m+j-1}{j}_{q^2} \tqbinom{n+m-j}{2m-j}_{q^2}\notag\\
		&\qquad\quad + \sum_{m=0}^{n-1} \sum_{j=0}^{2m+1} q^{P_{1-s}(j,2m+1)} \tqbinom{2m+1}{s+j-1}_{q^2} \tqbinom{n-m+j-1}{j}_{q^2} \tqbinom{n+m-j}{2m-j+1}_{q^2},
	\end{align}
	and
	\begin{align}\label{eq:q-id-2}
		&q^{s(s-1)}\Tfnc{0}{2n+1}{s-1}{q}\notag\\
		&\qquad= \sum_{m=0}^n \sum_{j=0}^{2m} q^{P_s(j,2m)} \tqbinom{2m}{-s+j}_{q^2} \tqbinom{n-m+j}{j}_{q^2} \tqbinom{n+m-j}{2m-j}_{q^2}\notag\\
		&\qquad\quad + \sum_{m=0}^{n} \sum_{j=0}^{2m+1} q^{P_{1-s}(j,2m+1)} \tqbinom{2m+1}{s+j-1}_{q^2} \tqbinom{n-m+j-1}{j}_{q^2} \tqbinom{n+m-j+1}{2m-j+1}_{q^2},
	\end{align}
	where
	\begin{align*}
		P_s(a,b) := (s-a-1)^2 + (s-a+b)^2 + s-1.
	\end{align*}
\end{theorem}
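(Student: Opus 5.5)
We will prove both identities by extracting the coefficient of $x^s$ from the decomposition of $G_N^{(0)}(x)$ according to the length of the second component. A bipartition $(\pi^{(1)},\pi^{(2)})\in\Lambda^{2,2}$ with $\lrg(\pi^{(2)})\le N$ and $\len(\pi^{(2)})=M$ consists of an arbitrary strict partition $\pi^{(1)}$ with $\lrg(\pi^{(1)})\le M$ together with a strict $\pi^{(2)}$ of length $M$ and largest part $\le N$. Since $t_1=t_2=0$, the statistic $\omega$ is simply $\omega(\pi^{(1)})+\omega(\pi^{(2)})$. Hence
\begin{align*}
G_N^{(0)}(x)=\sum_{M=0}^{N}\Sigma_M(x,q)\,\Psi_{M,N}(x,q),
\end{align*}
and the left-hand side of \eqref{eq:GF-2} is the coefficient of $x^s$ here. (The displayed identity preceding the theorem should be read in exactly this sense.) The condition $N\ge M$ of Lemma~\ref{le:Psi} holds automatically, because a strict partition with largest part $\le N$ has length $M\le N$. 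We therefore substitute \eqref{eq:Sigma} and \eqref{eq:Psi} and split the sum according to the parity of $M$.

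Take first $N=2n$. For $M=2m$ with $0\le m\le n$, \eqref{eq:Psi} gives the prefactor $x^{-m}q^{m(2m+1)}$, the summand weight $(xq)^j$, and the binomials $\tqbinom{n-m+j-1}{j}_{q^2}\tqbinom{n+m-j}{2m-j}_{q^2}$. The factor $\Sigma_{2m}$ contributes $x^iq^{2i^2-i}\tqbinom{2m}{m+i}_{q^2}$. Matching the power of $x$ forces $i=s+m-j$, and the symmetry of the $q$-binomial coefficient turns $\tqbinom{2m}{2m+s-j}_{q^2}$ into $\tqbinom{2m}{-s+j}_{q^2}$.

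For $M=2m+1$, the terms with $m\le n-1$ are the only ones allowed since $M\le 2n$. Here the prefactor is $x^{m+1}q^{(2m+1)(m+1)}$, the weight is $(x^{-1}q)^j$, and the binomials are $\tqbinom{n-m+j-1}{j}_{q^2}\tqbinom{n+m-j}{2m+1-j}_{q^2}$, using $\lceil\frac{2n-2m-1}{2}\rceil=n-m$. Matching powers of $x$ forces $i=s+j-m-1$, which gives $\tqbinom{2m+1}{m+i}_{q^2}=\tqbinom{2m+1}{s+j-1}_{q^2}$.

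By \eqref{eq:GF-2}, the coefficient of $x^s$ equals $q^{s(s-1)}\rT_0(2n,s;q)$, since $s-N+2\lfloor N/2\rfloor=s$ when $N$ is even. This yields \eqref{eq:q-id-1}.

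The case $N=2n+1$ is identical in structure. Now $s-N+2\lfloor N/2\rfloor=s-1$, which produces $\rT_0(2n+1,s-1;q)$ on the left. On the right, the floors and ceilings of $\frac{2n+1-M}{2}$ shift, producing $\tqbinom{n-m+j}{j}_{q^2}$ in the even-$M$ terms and $\tqbinom{n+m-j+1}{2m-j+1}_{q^2}$ in the odd-$M$ terms. The odd range now runs up to $m=n$, because $M=2n+1\le N$.

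The main, and essentially only, obstacle is the exponent bookkeeping: one must check that $2i^2-i+\binom{M+1}{2}+j$, after substituting the forced value of $i$, equals $P_s(j,2m)$ in the even case and $P_{1-s}(j,2m+1)$ in the odd case. I would verify this by expanding both sides as quadratic polynomials in $(s,j,m)$ and comparing coefficients. I expect that any mismatch would come from the symmetry step $\tqbinom{M}{k}=\tqbinom{M}{M-k}$ or from the sign convention $x^{(-1)^M}$ in \eqref{eq:Psi}, so I would recheck those first against the small case $n=0$.
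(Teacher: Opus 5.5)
Your proposal is correct and follows the paper's route exactly: decompose $G_N^{(0)}(x)=\sum_{M=0}^{N}\Sigma_M(x,q)\Psi_{M,N}(x,q)$ by $M=\len(\pi^{(2)})$, insert \eqref{eq:Sigma} and \eqref{eq:Psi}, split by the parity of $M$, and extract the coefficient of $x^s$ via \eqref{eq:GF-2}. You are also right that the paper's displayed identity should be read as a coefficient extraction, and the exponent check you deferred does go through: with $i=s+m-j$ one gets $2i^2-i+m(2m+1)+j=P_s(j,2m)$, and with $i=s+j-m-1$ one gets $2i^2-i+(m+1)(2m+1)+j=P_{1-s}(j,2m+1)$.
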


\begin{question}
	Find a direct $q$-hypergeometric proof of \eqref{eq:q-id-1} and \eqref{eq:q-id-2}.
\end{question}

\subsection*{Acknowledgements}

This work was supported by the Austrian Science Fund (No.~10.55776/F1002). I started considering the finite Kleshchev bipartitions during my visit to Penn State University in April 2025, and the hospitality is well appreciated. I am grateful to Ae Ja Yee for pointing out Lemma~\ref{le:Psi} to me during this visit.

\bibliographystyle{amsplain}

\end{document}